\documentclass{amsart}[12pt]
\usepackage{mathrsfs, amsmath,amssymb}
\usepackage{fullpage}

\newtheorem{pro}{Proposition}
\newtheorem{lem}{Lemma}
\newtheorem{cor}{Corollary}
\newtheorem{rem}{Remark}

\title{Reciprocal of the First hitting time of the boundary of dihedral wedges by a radial Dunkl process}
\author[N. Demni]{Nizar Demni}
\address{IRMAR, Universit\'e de Rennes 1\\ Campus de
Beaulieu\\ 35042 Rennes cedex\\ France}
\email{nizar.demni@univ-rennes1.fr}
\date{\today}
\keywords{Generalized Bessel function; Dihedral groups; Gegenbauer polynomials; Modified Bessel functions; Radial Dunkl process; First hitting time of the boundary of a dihedral wedge.}
 \subjclass[2010]{60G18; 60G40.}
 
\begin{document}
\maketitle
\begin{abstract}
In this paper, we establish an integral representation for the density of the reciprocal of the first hitting time of the boundary of even dihedral wedges by a radial Dunkl process having equal multiplicity values. Doing so provides another proof and extends to all even dihedral groups the main result proved in \cite{Demni1}. We also express the weighted Laplace transform of this density through the fourth Lauricella function and establish similar results for odd dihedral wedges. 
\end{abstract}

\section{Introduction}
The radial Dunkl process associated with a finite reflection group is a diffusion valued in a Weyl chamber and depends on a set of non negative parameters called multiplicity values (\cite{CDGRVY}, Chapters II and III). Its infinitesimal generator is the differential part of the so-called Dunkl-Laplace operator subject to normal reflections at the boundary of the chamber. In the rank-one case, the radial Dunkl process is nothing else but a Bessel process and if all the multiplicities values vanish, then it reduces to the reflected Brownian motion in the Weyl chamber. Another quite interesting instance of the radial Dunkl process is the Brownian motion conditioned to stay in the interior of the Weyl chamber, due to its close connections to the representation theory of semi-simple finite-dimensional Lie algebras (\cite{BBO1}, \cite{BBO2}). 

For small multiplicity values, the radial Dunkl process hits almost surely the boundary of the Weyl chamber. In this case, the tail distribution of the first hitting time of the boundary can be computed from the absolute-continuity relations existing between the process distributions corresponding to different sets of multiplicity values (see \cite{CDGRVY}, p.208). In particular, this probability is expressed, for the infinite families of irreducible root systems $A,B,C,D$, through multivariate confluent hypergeometric function (\cite{Demni}). For dihedral root systems, the tail distribution is given by a series of Jacobi polynomials and one-variable confluent hypergeometric functions, which generalize the expansions derived in \cite{Deb} for the planar Brownian motion in dihedral wedges (in \cite{Demni0}, the formulas were only given for even dihedral groups and similar ones may be derived for odd dihedral groups). Note in passing that combinatorial techniques were used in \cite{Dou-Oco} where the tail distribution of the first exit time from Weyl chambers by a Brownian motion was represented through pfaffians. 

Back to the radial Dunkl process in dihedral wedges, an integral representation of the density of the reciprocal of the first hitting time of the boundary was obtained in \cite{Demni1} for the special angle $\pi/4$ and equal multiplicity values. The obtained integral has the merit to be obviously positive in contrary to its original series representation, and allows for proving an analogue of Dufresne's result on the first hitting time of zero by a Bessel process (see e.g. \cite{MY}) and others due to Vakeroudis and Yor on the exit time from a cone by a planar Brownian motion (\cite{VR}, \cite{Vak-Yor}). It is also more suited to derive asymptotics of this density yet we are intended to investigate them here. In this paper, we shall rather extend the aforementioned integral representation to all even dihedral groups with equal multiplicity values. Our proof of this extension is different from the one written in \cite{Demni1} and appeal to an identity proved in \cite{Del-Dem} and satisfied by ultraspherical polynomials (see eq. \eqref{IdGeg} below). Compared to the integral obtained for the wedge of angle $\pi/4$, the domain of integration is  the multidimensional standard simplex and its integrand still involves a one-variable confluent hypergeometric function yet with a more complicated argument. When the radial Dunkl process starts at a point lying on the bisector of the dihedral wedge, we express the weighted Laplace transform of the density through the fourth Lauricella function, thereby extending to all even dihedral groups Corollary 3.5 proved in \cite{Demni1}. As to the extension Corollary 3.3 in \cite{Demni1}, it is straightforward and we will only allude to it without details in a aside remark. Using an analytic analogy between the infinitesimal generators of radial Dunkl processes corresponding on one side to odd dihedral groups and on the other side to even dihedral groups with equal multiplicity values, we derive an analogous integral representation for odd dihedral wedges where this time, the integrand involves rather a Fox-Wright confluent hypergeometric function. The weighted Laplace transform is then computed but is not expressed through a multivariable hypergeometric series. Let us finally stress that the choice of equal multiplicity values is not a restriction and is rather made for sake of simplicity since the formulas are already cumbersome. Nonetheless, we hope dealing with the case of different multiplicity values in a future paper.  

The paper is organized as follows. The next section recalls some facts on reflection groups and systems, as well as on the radial Dunkl process in dihedral wedges. In the third section, we recall the definitions and some properties of various special functions occurring in the paper. The fourth section is devoted to the derivation of the integral representation of the density for even dihedral wedges and equal multiplicity values, and of its weighted Laplace transform. In the last section, we derive analogous results for odd dihedral wedges.   

\section{Reminder: Dunkl processes in dihedral wedges}
\subsection{Dihedral systems and groups} 
For general facts on root systems and reflection groups, we refer the reader to the monographs \cite{Dun-Xu} and \cite{Hum}. The dihedral group, $\mathcal{D}_2(n), n \geq 3$, consists of orthogonal transformations leaving invariant a regular $n$-sided polygon centered at the origin. Actually, it contains $n$ rotations: 
\begin{equation*}
r_j: z \mapsto ze^{2i\pi j/n}, j =1, \quad \ldots, n, 
\end{equation*}
and $n$ reflections: 
\begin{equation*}
s_j: z \mapsto \overline{z}e^{2i\pi j/n}, \quad j =1, \ldots, n,
\end{equation*}
where $z \in \mathbb{C}$ and $i = \sqrt{-1}$ is the complex imaginary unit. As a finite reflection group, $\mathcal{D}_2(n)$ corresponds to the dihedral root system $I_2(n)$, which may be defined as the set of planar vectors: 
\begin{equation*}
I_2(n) := \{\pm i e^{i\pi l/n},\, 1 \leq l \leq n\}. 
\end{equation*}  
To the following choice of positive root system
\begin{equation*}
\{- i e^{i\pi l/n},\, 1 \leq l \leq n\}, 
\end{equation*}  
corresponds a cone, named the positive Weyl chamber $C$, and is the wedge of dihedral angle $\pi/n$ parametrized in polar coordinates as:
\begin{equation*}
C := \{(r, \theta), \, r > 0, \, 0 < \theta < \pi/n\}.  
\end{equation*}
On the other hand, the dihedral group acts on its root systems in a natural way and this action gives rise to two orbits when $n := 2p, p\geq 2,$ (the roots forming the diagonals and those forming the lines joining the midpoints of the polygon) while it is transitive otherwise. Accordingly, we shall assign to the two orbits non negative real parameters $k_0,k_1,$ respectively when $n$ is even and only one non negative real parameter $k$ otherwise (these parameters are known in the realm of Lie groups as multiplicity values). With these data, we are now ready to recall the definition of the radial Dunkl process associated with dihedral groups (see \cite{Demni0} for further details).  

\subsection{Radial Dunkl processes in dihedral wedges}
The radial Dunkl process $X$ associated with a given reduced root system is a $\overline{C}$-valued diffusion reflected at the boundary $\partial C$ (see \cite{CDGRVY} for further details). In particular, for odd dihedral groups $\mathcal{D}_2(n), n \geq 3$, its infinitesimal generator acts on smooth functions supported in $\overline{C}$ as (\cite{Demni0}):
\begin{equation}\label{G1}
\frac{1}{2}\left[\partial_r^2  + \frac{2nk +1}{r}\partial_r\right] + \frac{1}{r^2}\left[\frac{\partial_{\theta}^2}{2} +nk\cot(n\theta)\partial_{\theta}\right],
\end{equation}
subject to Neumann boundary conditions. While for even dihedral groups $\mathcal{D}_2(2p)$, the action of the infinitesimal generator of $X$ is given by:
\begin{equation}\label{G2}
\frac{1}{2}\left[\partial_r^2  + \frac{2p(k_0+k_1) +1}{r}\partial_r\right]  + \frac{1}{r^2}\left[\frac{\partial_{\theta}^2}{2} +p(k_0\cot(p\theta) - k_1\tan(p\theta))\partial_{\theta}\right].
\end{equation} 
These polar decompositions show that the Euclidean norm of $X$ is a Bessel process and that the cosine of its angular part is, up to a time change, a real Jacobi process on the interval $[0,\pi/n]$ (\cite{Rev-Yor}, \cite{Demni0}). This last observation was the key ingredient in the study  of the first hitting time $T_0$ of $\partial C$ by $X$: 
\begin{equation*}
T_0 := \inf\{t \geq 0, \, X_t \in \partial C\}. 
\end{equation*}
In this respect, recall from \cite{Chy}, Proposition 6 and from \cite{Dem}, Proposition 1, that $T_0$ is almost surely finite if and only if (at least) one of the multiplicity values is strictly less than $1/2$. In particular, let $n = 2p, p \geq 2, x = \rho e^{i\phi} \in C$ and assume $(k_0, k_1) \in (1/2, 1]$. Then, the radial Dunkl process $X$ starting at $X_0 = x$ and associated with the multiplicity values $(1-k_0, 1-k_1)$ hits almost surely the boundary of the dihedral wedge of angle $\pi/(2p)$. As such, the corresponding reciprocal of $T_0$
\begin{equation*}
V_0 := \frac{\rho^2}{2T_0} 
\end{equation*}
is a almost surely positive random variable. A similar statement holds true for odd dihedral wedges. 

\section{Special functions}
In this section, we record the definitions of various special functions occurring in the remainder of the paper as well as some of their properties we will need in our subsequent computations. The reader is referred for instance to \cite{AAR}, \cite{Er}, \cite{Erd}, \cite{Man-Sri}. 
We start with the Gamma integral: 
\begin{equation*}
\Gamma(z) = \int_0^{\infty} e^{-u}u^{z-1} du, \quad \Re(z) > 0, 
\end{equation*}
and the Legendre duplication formula: 
\begin{equation}\label{Leg}
\sqrt{\pi}\Gamma(2z+1) = 2^{2z-1}\Gamma\left(z+\frac{1}{2}\right)\Gamma(z+1).
\end{equation}
Then, we recall the Pochhammer symbol:
\begin{equation*}
(a)_k = (a+k-1)\dots(a+1)a, \quad a \in \mathbb{C}, \, k \in \mathbb{N}, 
\end{equation*}
which may be written when $a > 0$ as:
\begin{equation*}
(a)_k = \frac{\Gamma(a+k)}{\Gamma(a)}.
\end{equation*} 
Next comes the generalized hypergeometric function defined by the series: 
\begin{equation*}
{}_rF_q((a_i, 1 \leq i \leq r), (c_j, 1 \leq j \leq q); z) = \sum_{m \geq 0}\frac{\prod_{i=1}^r(a_i)_m}{\prod_{j=1}^q(c_j)_m}\frac{z^m}{m!},
\end{equation*}
provided it converges absolutely. Here, an empty product equals one and the parameters $(a_i, 1 \leq i \leq r)$ are complex numbers while $(c_j, 1 \leq j \leq q) \in \mathbb{C} \setminus -\mathbb{N}$. If $a_i = -n \in - \mathbb{N}$ for some $1 \leq i \leq r$,
then the hypergeometric series terminates and as such, reduces  to a polynomial of degree $n$. For instance, the $j$-th Gegenbauer polynomial of parameter $\lambda$ is defined through the Gauss hypergeometric function ${}_2F_1$:
\begin{equation}\label{GegJac}
C_j^{(\lambda)}(z) := \frac{(2\lambda)_j}{j!}{}_2F_1\left(-j, j+2\lambda, \lambda+\frac{1}{2}, \frac{1-z}{2}\right), \quad \lambda \neq 0.
\end{equation}
For this polynomial and this hypergeometric function, the following argument transformations hold: 
\begin{equation}\label{Sym}
C_j^{(\lambda)}(-z) = (-1)^j C_j^{(\lambda)}(z),  
\end{equation}
\begin{equation}\label{ET}
{}_2F_1(a,c_1, c_2; z) = (1-z)^{-a} {}_2F_1\left(a, c_2-c_1, c_2; \frac{z}{z-1}\right), \quad |\arg(1-z)| < \pi.
\end{equation}
Now, the confluent hypergeometric series ${}_1F_1$ converges in the whole complex plane: 
\begin{equation}\label{ExpConf}
{}_1F_1(a,c, z) = \sum_{j \geq 0} \frac{(a)_j}{(c)_j} \frac{z^j}{j!}, 
\end{equation}
and admits the following integral representation valid for $\Re(c) > \Re(a) > 0$: 
\begin{align}
{}_1F_1\left(a, c, z\right) = \frac{\Gamma(c)}{\Gamma(a)\Gamma(c-a)}\int_0^1e^{uz}u^{a-1}(1-u)^{c-a-1}du. \label{Euler}
\end{align}
We shall also need the fourth Lauricella function $F_D^{(p-1)}$ in $p-1, p \geq 2$ variables: 
\begin{equation}\label{Laur}
F_D^{(p-1)}(a, d_1, \ldots, d_{p-1}, d_p; z_1, \ldots, z_{p-1}) := \sum_{m_1, \ldots, m_{p-1} \geq 0} \frac{(a)_{m_1+\ldots+m_{p-1}}}{(d_p)_{m_1+\ldots+m_{p-1}}}\prod_{s=1}^{p-1} (d_s)_{m_s} z_s^{m_s},
\end{equation}
which converges for $|z_s| < 1, 1 \leq s \leq p-1$, where $a, d_1, \ldots, d_{p-1} \in \mathbb{C}$ and $d_p \in \mathbb{C} \setminus \mathbb{N}$. When $\Re(d_s) > 0, 1 \leq s \leq p,$ it admits the following Euler-type integral representation 
(see e.g. \cite{Lij-Reg}, eq. (2.1)): 
\begin{multline}\label{IRD}
F_D^{(p-1)}(a, d_1, \ldots, d_{p-1}, d_1 + \ldots + d_{p-1} + d_p; z_1, \ldots, z_{p-1}) = \frac{\Gamma(a+d_1+\ldots+d_{p-1})}{\Gamma(a)\prod_{s=1}^{p-1}\Gamma(d_s)}\int_{\Sigma_p} du_1 \ldots du_{p-1}  
\\ \left(1-u_1z_1-\ldots - u_{p-1}z_{p-1}\right)^{-a} \prod_{s=1}^{p}u_s^{d_s-1},
\end{multline}
where 
\begin{equation*}
\Sigma_p := \{(u_1, \ldots, u_p), \, u_1, \ldots, u_p \geq 0, \, u_1 + \ldots + u_p = 1\}
\end{equation*}
is the standard simplex in $\mathbb{R}^p$. Finally, we recall the Dirichlet integral: for any $\beta_1, \ldots, \beta_p > 0$,
\begin{equation}\label{Dir}
\int_{\Sigma_p} du_1 \ldots du_{p-1}\prod_{s=1}^p u_s^{\beta_s-1} = \frac{\Gamma(\beta_1) \ldots \Gamma(\beta_p)}{\Gamma(\beta_1+\ldots+\beta_p)},
\end{equation}
and we denote
\begin{equation*}
\mu^{s}(du) = \frac{\Gamma(s+1/2)}{\sqrt{\pi}\Gamma(s)}(1-u^2)^{s-1} {\bf 1}_{[-1,1]}(u)du, \quad s > 0,
\end{equation*}
the symmetric Beta distribution. 

\section{Integral representation of the density of $V_0$: even dihedral wedges and equal multiplicity values}
Let $k_0 = k_1 \equiv k \in (1/2,1]$ and consider the radial Dunkl process in an even dihedral wedge of angle $\pi/(2p), p \geq 2,$ and associated with $(1-k, 1-k)$. In \cite{Demni1}, it was shown that\footnote{The result is also valid for $p=1$} the density of $V_0$ is, up to a normalizing constant, the even part of the following series (see eq. (3.2), p.146): 
\begin{equation}\label{Serie1}
\sin^{2\nu}(2p\phi) e^{-v}v^{2p\nu-1}\sum_{j \geq 0}\frac{\Gamma(p(j+1))}{\Gamma(2p(j+k))}v^{pj}{}_1F_1\left(p(j+1), 2p(j+k)+1, v\right)C_{j}^{(k)}(\cos(2p\phi)),
\end{equation}
where
\begin{equation*}
v > 0, \quad \nu := k - \frac{1}{2}.
\end{equation*}
For the particular value $p =2$, the following integral representation of \eqref{Serie1} was obtained in \cite{Demni1}, Lemma 3.2: 
\begin{multline}\label{Formula}
\sum_{j \geq 0}\frac{\Gamma(2(j+1))}{\Gamma(4(j+k))}v^{2j}{}_1F_1\left(2(j+1), 4(j+k) + 1, v\right)C_{j}^{(k)}(\cos(4\phi)) = \frac{1}{\Gamma(4k)} \\ \int {}_1F_1\left(2, 2\nu+\frac{3}{2}; \frac{v(1-\cos(2\phi)u)}{2}\right)\mu^{k}(du).
\end{multline}
The extension of \eqref{Formula} to all values $p \geq 2$ is as follows: 

\begin{pro}
For any $p \geq 2$, 
\begin{multline}\label{Formula1}
\sum_{j \geq 0}\frac{\Gamma(p(j+1))}{\Gamma(2p(j+k))}v^{pj}{}_1F_1\left(p(j+1), 2p(j+k)+1, v\right)C_{j}^{(k)}(\cos(2p\phi))  = \frac{\Gamma(p)\Gamma(pk)}{[\Gamma(k)]^p\Gamma(2kp)} \int_{\Sigma_p}du_1 \ldots du_{p-1} \prod_{s=1}^p u_s^{k-1} 
\\ {}_1F_1 \left(p, pk+\frac{1}{2}; v \sum_{s=1}^p u_s \cos^{2}\left(\phi + \frac{s\pi}{p}\right)\right).
\end{multline}
\end{pro}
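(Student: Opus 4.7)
The plan is to match the coefficients of $v^N$ on both sides of \eqref{Formula1}. On the right-hand side, I expand ${}_1F_1$ as a series via \eqref{ExpConf}, apply the multinomial theorem to $(\sum_s u_s c_s)^n$ with $c_s := \cos^2(\phi + s\pi/p)$, and evaluate the simplex integrals by means of the Dirichlet formula \eqref{Dir}. After recognizing that $\sum_{|m|=n}\prod_s \frac{(k)_{m_s}}{m_s!}c_s^{m_s} = [y^n]\prod_{s=1}^p(1-yc_s)^{-k}$, two applications of Legendre's duplication formula \eqref{Leg} (to $\Gamma(pk)\Gamma(pk+1/2)$ and to $\Gamma(pk+N)\Gamma(pk+N+1/2)$) rewrite the coefficient of $v^N$ on the right-hand side as
\[
\frac{\Gamma(p+N)\cdot 4^N}{\Gamma(2pk+2N)}\,[y^N]\prod_{s=1}^p(1-y c_s)^{-k}.
\]

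On the left-hand side, expanding ${}_1F_1$ and simplifying the Pochhammer symbols via $\Gamma(x+1) = x\Gamma(x)$ identifies the coefficient of $v^N$ as
\[
\Gamma(p+N)\sum_{j \geq 0,\, pj \leq N} \frac{2p(j+k)}{(N-pj)!\,\Gamma(N+pj+2pk+1)}\,C_j^{(k)}(\cos(2p\phi)).
\]
Comparing the two expressions, the proposition reduces to the single formal power-series identity
\[
[y^N]\prod_{s=1}^p (1-y c_s)^{-k} = \frac{\Gamma(2pk+2N)}{4^N}\sum_{pj \leq N} \frac{2p(j+k)\,C_j^{(k)}(\cos(2p\phi))}{(N-pj)!\,\Gamma(N+pj+2pk+1)}.
\]

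To establish this identity I use the double-angle formula $2c_s = 1 + \cos(2\phi+2s\pi/p)$ together with the substitution $y = 4u/(1+u)^2$, which gives $1 - y c_s = (1+u)^{-2}\bigl(1 - 2u\cos(2\phi+2s\pi/p) + u^2\bigr)$. Taking the $p$-fold product and invoking \eqref{IdGeg}, namely the Chebyshev-like factorization $\prod_{s=1}^p(1-2u\cos(2\phi+2s\pi/p)+u^2) = 1 - 2u^p\cos(2p\phi)+u^{2p}$, combined with the Gegenbauer generating function, yields
\[
\prod_{s=1}^p(1 - y c_s)^{-k} = (1+u)^{2pk}\sum_{j \geq 0} C_j^{(k)}(\cos(2p\phi))\, u^{pj}.
\]
A Lagrange--B\"urmann computation based on the change of variable $y = 4u/(1+u)^2$ (for which $dy = 4(1-u)(1+u)^{-3}du$ and $y^{N+1} = 4^{N+1}u^{N+1}(1+u)^{-2N-2}$) then converts $[y^N]$ of the left-hand side into
\[
\frac{1}{4^N}[u^N]\Bigl((1-u)(1+u)^{2pk+2N-1}\sum_j C_j^{(k)}(\cos(2p\phi))\,u^{pj}\Bigr),
\]
and the desired closed form emerges from the elementary binomial-difference identity
\[
\binom{2pk+2N-1}{N-pj}-\binom{2pk+2N-1}{N-pj-1} = \frac{2p(j+k)\,\Gamma(2pk+2N)}{(N-pj)!\,\Gamma(N+pj+2pk+1)},
\]
which is a direct consequence of $\Gamma(x+1)=x\Gamma(x)$.

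The main technical obstacle is the careful bookkeeping of Gamma function factors: one must verify that the constant produced by the Lagrange inversion on one side and by Legendre duplication applied to the Dirichlet/Pochhammer products on the other coincide exactly. Everything else is formal power-series manipulation, the nontrivial content of which is entirely concentrated in the product identity \eqref{IdGeg} from \cite{Del-Dem}.
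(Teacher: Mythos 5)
Your proof is correct, and it takes a genuinely different route from the paper's. Both arguments begin the same way (expand the two ${}_1F_1$'s via \eqref{ExpConf}, integrate out the simplex with \eqref{Dir}, and apply Legendre duplication \eqref{Leg} to reduce everything to matching the coefficient of $v^N$), but the core combinatorial identity is handled differently. The paper invokes the external identity \eqref{IdGeg} of Del\'eaval--Demni with $q=2p$, $M=2N$, which produces a sum over $2p$-tuples with first powers of cosines; it must then run a parity--cancellation argument on the pairs $(j_s,j_{s+p})$ together with the alternating convolution $\sum_{j}(-1)^{j}(k)_{j}(k)_{2m-j}/(j!(2m-j)!)=(k)_m/m!$ to collapse this to a sum over $p$-tuples with $\cos^{2m_s}$. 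You instead prove the needed $p$-variable identity from scratch: you recognize $\sum_{|m|=N}\prod_s (k)_{m_s}c_s^{m_s}/m_s!$ as $[y^N]\prod_s(1-yc_s)^{-k}$, use the elementary factorization $\prod_{s=1}^p\bigl(1-2u\cos(2\phi+2s\pi/p)+u^2\bigr)=1-2u^p\cos(2p\phi)+u^{2p}$ together with the Gegenbauer generating function, and extract the coefficient by Lagrange--B\"urmann inversion under $y=4u/(1+u)^2$; the final binomial-difference computation is exactly right, since $\binom{2pk+2N-1}{N-pj}-\binom{2pk+2N-1}{N-pj-1}=2p(j+k)\Gamma(2pk+2N)/\bigl((N-pj)!\,\Gamma(N+pj+2pk+1)\bigr)$. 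Your approach is self-contained (it effectively re-derives the relevant specialization of \eqref{IdGeg} rather than citing it) and avoids the $2p$-to-$p$ cancellation step, at the price of the Lagrange-inversion bookkeeping; the paper's approach is shorter once \eqref{IdGeg} is granted. One small slip: you label the Chebyshev-type product factorization as \eqref{IdGeg}, but that tag refers to the Gegenbauer series identity of \cite{Del-Dem}, not to the factorization, which is an elementary consequence of $\prod_{s=1}^p(1-a\omega^s)=1-a^p$ for $\omega=e^{2i\pi/p}$; this is a citation mismatch only and does not affect the argument.
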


\begin{proof}
From \eqref{ExpConf}, we readily derive
\begin{align*}
\frac{\Gamma(p(j+1))}{\Gamma(2p(j+k))}{}_1F_1\left(p(j+1), 2p(j+k)+1, v\right) & = 2p(j+k) \sum_{m \geq 0} \frac{\Gamma(a_j+m)}{\Gamma(b_j+m+1)} \frac{v^m}{m!},
\end{align*}
whence
\begin{multline*}
\sum_{j \geq 0}\frac{\Gamma(p(j+1))}{\Gamma(2p(j+k))}v^{pj}{}_1F_1\left(p(j+1), 2p(j+k)+1, v\right)C_{j}^{(k)}(\cos(2p\phi)) =  (2p) \sum_{j, m\geq 0}  (j+k) \\ 
\frac{\Gamma(pj+m + p)}{\Gamma(2pj+m + 2pk +1)m!} v^{pj+m} C_{j}^{(k)}(\cos(2p\phi)) 
\\ = \sum_{N \geq 0}v^N \Gamma(N+p) \sum_{\substack{m,j \geq 0 \\ N =pj+m}} \frac{2p(j+k)}{\Gamma(2pj+m + 2pk +1)m!}  C_{j}^{(k)}(\cos(2p\phi)).
\end{multline*}
Now, recall from \cite{Del-Dem}, Proposition 1,  the following identity: for any integers $M \geq 0, q \geq 1$, any real numbers $k > 0, \xi \in [0,\pi]$, we have:  
\begin{equation}\label{IdGeg}
\sum_{\substack{m,j \geq 0 \\ M = 2m+qj}}\frac{q(j+k)}{m!\Gamma(q(j+k)+m+1)}C_j^{(k)}(\cos \xi) = \frac{2^M}{\Gamma(M+qk)} \sum_{\substack{j_1, \ldots, j_q \geq 0 \\ j_1+\cdots j_q = M}}  
(k)_{j_1}\ldots (k)_{j_{q}} \frac{[b_{1,q}(\xi)]^{j_1}}{j_1!}\cdots  \frac{[b_{q,q}(\xi)]^{j_{q}}}{j_{q}!},
\end{equation}
where 
\begin{equation*}
b_{s,q}(\xi) := \cos\left(\frac{\xi + 2s\pi}{q}\right), \quad s = 1, \ldots, q.
\end{equation*} 
In particular, if $M = 2N, q = 2p, \xi = 2p\phi$, then \eqref{IdGeg} specializes to:
\begin{align*}
\sum_{\substack{m,j \geq 0 \\ N = m+pj}}\frac{2p(j+k)}{m!\Gamma(2p(j+k)+m+1)}C_j^{(k)}(\cos(2p\phi)) = \frac{2^{2N}}{\Gamma(2N+2pk)} \sum_{\substack{j_1, \ldots, j_{2p} \geq 0 \\ j_1+\cdots j_{2p} = 2N}}  
\prod_{s=1}^{2p} \frac{(k)_{j_s}}{j_s!} \cos^{j_s} \left(\phi + \frac{s\pi}{p}\right).
\end{align*}
As a result, 
\begin{multline*}
\sum_{j \geq 0}\frac{\Gamma(p(j+1))}{\Gamma(2p(j+k))}v^{pj}{}_1F_1\left(p(j+1), 2p(j+k)+1, v\right) C_{j}^{(k)}(\cos(2p\phi)) = \sum_{N \geq 0}v^N  \frac{2^{2N}\Gamma(N+p)}{\Gamma(2N+2pk)} 
\\ \sum_{\substack{j_1, \ldots, j_{2p} \geq 0 \\ j_1+\cdots j_{2p} = 2N}}  \prod_{s=1}^{2p} \frac{(k)_{j_s}}{j_s!} \cos^{j_s}\left(\phi + \frac{s\pi}{p}\right)
 = \frac{\Gamma(p)}{\Gamma(2pk)} \sum_{N \geq 0}v^N\frac{(p)_N}{(pk)_N(pk+(1/2))_N} \\ 
  \sum_{\substack{j_1, \ldots, j_{2p} \geq 0 \\ j_1+\cdots j_{2p} = 2N}}  \prod_{s=1}^{2p} \frac{(k)_{j_s}}{j_s!} \cos^{j_s}\left(\phi + \frac{s\pi}{p}\right).
\end{multline*}
Now, since $\cos(a+\pi) = -\cos(a)$, then the finite sum in last equality may be written as: 
\begin{equation}\label{Sum1}
\sum_{\substack{j_1, \ldots, j_{2p} \geq 0 \\ j_1+\cdots j_{2p} = 2N}} (-1)^{j_1+\dots+j_p} \prod_{s=1}^{2p} \frac{(k)_{j_s}}{j_s!}  \prod_{s=1}^p \cos^{j_s + j_{s+p}}\left(\phi + \frac{s\pi}{p}\right), 
\end{equation}
which may be further simplified as follows. Consider a $2p$-tuple of integers in the sum \eqref{Sum1} arranged in $p$ pairs: 
\begin{equation*}
(j_1,j_{p+1}), \ldots, (j_p, j_{2p}), \quad \sum_{s=1}^{2p} j_s = 2N, 
\end{equation*}
and assume that one pair consists of an even and an odd integers. Then, the $2p$-tuple obtained from the previous one by permuting the even and the odd integers in the given pair (while keeping the other pairs unchanged) cancels the latter since the expression
\begin{equation*}
\prod_{s=1}^{2p} \frac{(k)_{j_s}}{j_s!}  \prod_{s=1}^p \cos^{j_s + j_{s+p}}\left(\phi + \frac{s\pi}{p}\right) 
\end{equation*}
is the same for both tuples. As a matter of fact, the only tuples that remains in \eqref{Sum1} after cancellations are those for which $j_s+j_{s+p}$ is even, therefore 
\begin{multline}\label{Sum2}
\sum_{\substack{j_1, \ldots, j_{2p} \geq 0 \\ j_1+\cdots j_{2p} = 2N}} (-1)^{j_1+\dots+j_p} \prod_{s=1}^{2p} \frac{(k)_{j_s}}{j_s!}  \prod_{s=1}^p \cos^{j_s + j_{s+p}}\left(\phi + \frac{s\pi}{p}\right) = 
\sum_{\substack{j_1, \ldots, j_{2p} \geq 0 \\ j_1+\cdots j_{2p} = 2N \\ j_s+j_{s+p} \textrm{is even}}}  (-1)^{j_1+\dots+j_p} \\ \prod_{s=1}^{2p} \frac{(k)_{j_s}}{j_s!}  \prod_{s=1}^p \cos^{j_s+j_{s+p}}\left(\phi + \frac{s\pi}{p}\right).
\end{multline}
Setting $m_s = j_s+j_{s+p}$ and using the identity 
\begin{equation*}
\sum_{j_s = 0}^{2m_s} (-1)^{j_s} \frac{(k)_{j_s}(k)_{2m_s-j_s}}{j_s!(2m_s - j_s)!} = \frac{(k)_{m_s}}{m_s!},
\end{equation*}
which is readily checked by equating generating functions of both sides, then the right-hand side of \eqref{Sum2} may be written as 
\begin{align*}
\sum_{\substack{m_1, \ldots, m_{p} \geq 0 \\ m_1+\cdots m_{p} = N}} \prod_{s=1}^p \cos^{2m_s}\left(\phi + \frac{s\pi}{p}\right)  \sum_{j_s = 0}^{2m_s} (-1)^{j_s} \frac{(k)_{j_s}(k)_{2m_s-j_s}}{j_s!(2m_s - j_s)!} = 
\sum_{\substack{m_1, \ldots, m_{p} \geq 0 \\ m_1+\cdots m_{p} = N}} \prod_{s=1}^p \cos^{2m_s}\left(\phi + \frac{s\pi}{p}\right) \frac{(k)_{m_s}}{m_s!}.
\end{align*}

Consequently, 
\begin{multline*}
\sum_{j \geq 0}\frac{\Gamma(p(j+1))}{\Gamma(2p(j+k))}v^{pj}{}_1F_1\left(p(j+1), 2p(j+k)+1, v\right) C_{j}^{(k)}(\cos(2p\phi))  = \frac{\Gamma(p)}{\Gamma(2pk)}\sum_{N \geq 0}v^N\frac{(p)_N}{(pk)_N(pk+(1/2))_N}  \\ 
\sum_{\substack{m_1, \ldots, m_{p} \geq 0 \\ m_1+\cdots m_{p} = N}} \prod_{s=1}^p \cos^{2m_s}\left(\phi + \frac{s\pi}{p}\right) \frac{(k)_{m_s}}{m_s!}
= \frac{\Gamma(p)}{\Gamma(2pk)}\sum_{m_1, \ldots, m_{p} \geq 0 }\frac{(p)_{m_1+\dots+m_p}}{(pk)_{m_1+\dots+m_p}(pk+(1/2))_{m_1+\dots+m_p}} \\ 
\prod_{s=1}^p \frac{(k)_{m_s}}{m_s!} \left\{v\cos \left(\phi + \frac{s\pi}{p}\right)\right\}^{2m_s}.
\end{multline*}
Finally, substituting $\beta_s = k+ m_s$ in \eqref{Dir} and using the multinomial theorem: 
\begin{equation*}
(a_1+\ldots +a_p)^N = \sum_{m_1+\ldots + m_p = N} \frac{N!}{m_1!\ldots m_p!} \prod_{s=1}^p a_s^{m_s},
 \end{equation*}
together with \eqref{ExpConf}, we end up with : 
\begin{multline*}
\frac{\Gamma(p)}{\Gamma(2pk)}\sum_{m_1, \ldots, m_{p} \geq 0 }\frac{(p)_{m_1+\dots+m_p}}{(pk)_{m_1+\dots+m_p}(pk+(1/2))_{m_1+\dots+m_p}}  
\prod_{s=1}^p \frac{(k)_{m_s}}{m_s!} \left\{v\cos^2 \left(\phi + \frac{s\pi}{p}\right)\right\}^{m_s} = \frac{\Gamma(p)\Gamma(pk)}{[\Gamma(k)]^p\Gamma(2pk)} 
\\ \int_{\Sigma_p} du_1\ldots du_{p-1} \sum_{m_1, \ldots, m_{p} \geq 0 }\frac{(p)_{m_1+\dots+m_p}}{(pk+(1/2))_{m_1+\dots+m_p}}  
\prod_{s=1}^{p} \frac{u_s^{k+m_s-1}}{m_s!} \left\{v\cos^2 \left(\phi + \frac{s\pi}{p}\right)\right\}^{m_s}
\\ = \frac{\Gamma(p)\Gamma(pk)}{[\Gamma(k)]^p\Gamma(2pk)} 
 \int_{\Sigma_p} du_1\ldots du_{p-1} \prod_{s=1}^{p} u_s^{k-1}\sum_{N \geq 0}\frac{(p)_N}{(pk+(1/2))_N} \\ 
 \sum_{m_1+\ldots + m_p= N}  \prod_{s=1}^{p} \frac{u_s^{m_s}}{m_s!} \left\{v\cos^2 \left(\phi + \frac{s\pi}{p}\right)\right\}^{m_s} 
 \\ = \frac{\Gamma(p)\Gamma(pk)}{[\Gamma(k)]^p\Gamma(2pk)} 
 \int_{\Sigma_p} du_1\ldots du_{p-1} \prod_{s=1}^p u_s^{k-1}\sum_{N \geq 0}\frac{(p)_N}{(pk+(1/2))_N} \frac{1}{N!}\left\{v\sum_{s=1}^pu_s\cos^2 \left(\phi + \frac{s\pi}{p}\right)\right\}^N. 
\end{multline*}

Using \eqref{ExpConf}, the proposition is proved. 
\end{proof}

\begin{rem}
If $p=2$, then \eqref{Formula1} simplifies to 
\begin{multline*}
\frac{\Gamma(2k)}{[\Gamma(k)]^2\Gamma(4k)} \int_0^1u^{k-1}(1-u)^{k-1}  {}_1F_1 \left(2, 2k+\frac{1}{2}; v \left\{u \sin^{2}\left(\phi \right) + (1-u) \cos^2(\phi)\right\}\right) du = \frac{\Gamma(2k)}{[\Gamma(k)]^2\Gamma(4k)}  
\\ \int_0^1u^{k-1}(1-u)^{k-1}  {}_1F_1 \left(2, 2k+\frac{1}{2}; v \left\{\frac{1+ (1-2u) \cos(2\phi)}{2}\right\}\right) du.
\end{multline*}
Performing the variable change $u \mapsto (1-u)/2$ in this integral and using Legendre duplication formula \eqref{Leg}, one recovers \eqref{Formula} (recall $k = \nu + (1/2))$. 
\end{rem}

Keeping in mind \eqref{Serie1}, we get: 
\begin{cor}\label{Cor1}
Let $k \in (1/2,1]$ and $p \geq 2$. Then, the density of $V_0$ admits the following integral representation (up to a normalizing constant depending only on $p,k$): 
\begin{multline*}
\sin^{2\nu}(2p\phi) e^{-v}v^{2p\nu-1} \int_{\Sigma_p} du_1 \ldots du_{p-1} \prod_{s=1}^p u_s^{k-1}
\left\{ {}_1F_1 \left(p, p\nu+\frac{(p+1)}{2}; v \sum_{s=1}^p u_s \cos^{2}\left(\phi + \frac{s\pi }{p}\right)\right) \right. + \\ \left.{}_1F_1 \left(p, p\nu+\frac{(p+1)}{2}; v \sum_{s=1}^p u_s \cos^{2}\left(\phi - \frac{(2s-1)\pi }{2p}\right)\right)\right\}.
\end{multline*}
\end{cor}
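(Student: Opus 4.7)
The plan is to derive Corollary \ref{Cor1} directly from Proposition 1 by making explicit the ``even part'' operation mentioned in the paragraph preceding \eqref{Serie1}; I would interpret it as the symmetrization of \eqref{Serie1} with respect to the reflection $\phi \mapsto \pi/(2p) - \phi$ about the bisector of the wedge. Under this reflection the prefactor $\sin^{2\nu}(2p\phi)\, e^{-v}\, v^{2p\nu-1}$ is invariant (since $\sin(\pi - 2p\phi) = \sin(2p\phi)$), while $\cos(2p\phi) \mapsto -\cos(2p\phi)$, so by \eqref{Sym} the Gegenbauer polynomial $C_j^{(k)}(\cos(2p\phi))$ picks up the sign $(-1)^j$. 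Writing $S(v,\phi)$ for the series on the left-hand side of \eqref{Formula1}, the even part of \eqref{Serie1} is then
\begin{equation*}
\sin^{2\nu}(2p\phi)\, e^{-v}\, v^{2p\nu-1} \cdot \tfrac{1}{2}\bigl[S(v,\phi) + S(v,\pi/(2p)-\phi)\bigr],
\end{equation*}
that is, the partial sum of \eqref{Serie1} over the even indices $j$.

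Next I would apply Proposition 1 separately to $S(v,\phi)$ and to $S(v,\pi/(2p)-\phi)$. The first application yields, up to the constant $\Gamma(p)\Gamma(pk)/([\Gamma(k)]^p\Gamma(2pk))$, exactly the first confluent hypergeometric term in the announced integrand, with arguments $\cos^2(\phi + s\pi/p)$, $s = 1, \ldots, p$. In the second application, using that $\cos^2$ is even,
\begin{equation*}
\cos^2\!\left(\tfrac{\pi}{2p} - \phi + \tfrac{s\pi}{p}\right) = \cos^2\!\left(\phi - \tfrac{(2s+1)\pi}{2p}\right), \qquad s = 1, \ldots, p.
\end{equation*}

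The main (and essentially only) delicate step is then to identify this latter multiset of arguments with $\{\cos^2(\phi - (2s-1)\pi/(2p)) : s = 1, \ldots, p\}$ from Corollary \ref{Cor1}. I would dispose of it by invoking the $\pi$-periodicity of $\cos^2$: the $s = p$ term equals $\cos^2(\phi - \pi - \pi/(2p)) = \cos^2(\phi - \pi/(2p))$, while for $s = 1, \ldots, p-1$ the angle $(2s+1)\pi/(2p)$ is already of the form $(2t-1)\pi/(2p)$ with $t = s+1$. Since the simplex $\Sigma_p$ and the weight $\prod_s u_s^{k-1}$ are both symmetric under every permutation of $(u_1,\ldots, u_p)$, the corresponding relabeling of the integration variables leaves the integral invariant and brings its integrand exactly to the second confluent hypergeometric term of Corollary \ref{Cor1}. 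Adding the two contributions and absorbing the factor $1/2$ together with the constant from Proposition 1 and the unspecified one from \eqref{Serie1} into a single normalizing constant depending only on $p$ and $k$ would then complete the proof.
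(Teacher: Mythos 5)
Your proposal is correct and follows essentially the same route as the paper: the even part is realized as the half-sum of the series at $\phi$ and at $\pi/(2p)-\phi$ via \eqref{Sym}, Proposition 1 is applied to each, and the arguments $\cos^2(\phi-(2s+1)\pi/(2p))$ are reindexed to $\cos^2(\phi-(2s-1)\pi/(2p))$ using the $\pi$-periodicity of $\cos^2$ for the $s=p$ term. Your explicit appeal to the permutation invariance of $\Sigma_p$ and of the weight $\prod_s u_s^{k-1}$ to justify the relabeling is a small but welcome addition of rigor over the paper's terser presentation.
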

\begin{proof}
As claimed in the beginning of this section, the density of $V_0$ is half of the sum of both series: 
\begin{equation*}
\sum_{j \geq 0}(\pm 1)^j\frac{\Gamma(p(j+1))}{\Gamma(2p(j+k))}v^{pj}{}_1F_1\left(p(j+1), 2p(j+k)+1, v\right)C_{j}^{(k)}(\cos(2p\phi)).
\end{equation*}
Using the symmetry relation \eqref{Sym}, we get: 
\begin{equation*}
(-1)^jC_{j}^{(k)}(\cos(2p\phi)) = C_{j}^{(k)}\left(\cos\left(2p\left(\frac{\pi}{2p} - \phi\right)\right)\right),
\end{equation*}
whence 
\begin{multline}\label{Formula2}
\sum_{j \geq 0}\frac{\Gamma(p(j+1))}{\Gamma(2p(j+k))}v^{pj}{}_1F_1\left(p(j+1), 2p(j+k), v\right)(-1)^jC_{j}^{(k)}(\cos(2p\phi))  = \frac{\Gamma(p)\Gamma(pk)}{[\Gamma(k)]^p\Gamma(2kp)} \int_{\Sigma_p}du_1\ldots du_{p-1} 
\\ \prod_{s=1}^p u_s^{k-1} {}_1F_1 \left(p, pk+\frac{1}{2}; v \sum_{s=1}^p u_s \cos^{2}\left(\phi - \frac{\pi(2s+1)}{2p} \right)\right).
\end{multline}
Since
\begin{equation*}
\cos^{2}\left(\phi - \frac{\pi(2p+1)}{2p} \right) = \cos^{2}\left(\phi  - \frac{\pi}{2p} \right),
\end{equation*}
then equality \eqref{Formula2} is rewritten: 
\begin{multline}\label{Formula3}
\sum_{j \geq 0}\frac{\Gamma(p(j+1))}{\Gamma(2p(j+k))}v^{pj}{}_1F_1\left(p(j+1), 2p(j+k), v\right)(-1)^jC_{j}^{(k)}(\cos(2p\phi))  = \frac{\Gamma(p)\Gamma(pk)}{[\Gamma(k)]^p\Gamma(2kp)} \int_{\Sigma_p}du_1\ldots du_{p-1} 
\\ \prod_{s=1}^p u_s^{k-1} {}_1F_1 \left(p, pk+\frac{1}{2}; v \sum_{s=1}^p u_s \cos^{2}\left(\phi - \frac{\pi(2s-1)}{2p} \right)\right),
\end{multline}
and the corollary is proved. 
\end{proof}

\begin{rem}
With \eqref{Formula} in hands, one readily extends Corollary 3.3 in \cite{Demni1} to all $p \geq 2$ provided 
\begin{equation*}
p(k -1) + \frac{1}{2} > 0, 
\end{equation*}
ensuring the validity of the Euler-type integral representation \eqref{Euler}. As to the extension of Corollary 3.5 in \cite{Demni1}, it goes as follows: 
\end{rem}

\begin{cor}\label{Cor2}
Assume the radial Dunkl process starts at a point lying on the bisector of the Weyl chamber: 
\begin{equation*}
x = \rho e^{i\pi/(4p)}, \quad \rho > 0. 
\end{equation*}
Denote $\mathbb{E}_{\rho, \pi/(4p)}^{(-\nu)}$ the distribution of this process associated with the common multiplicity value $(1-k)$. Then, for any $y > 0$,
\begin{multline*}
\mathbb{E}_{\rho, \pi/(4p)}^{(-\nu)}\left(V^{(p+1)/2- p\nu} e^{-yV_0}\right) \propto \frac{\sin^{2\nu}(2p\phi)}{(1+y)^{p(\nu-1)+(p+1)/2}} \left[y + \sin^2\left(\frac{\pi}{4p}\right)\right]^{-p} 
\\ F_D^{(p-1)}\left(p, \underbrace{k, \ldots, k}_{p-1}, pk; - \frac{\sin[3\pi/(2p)] \sin[(\pi/p)]}{y+\sin^2(\pi/(4p))}, \ldots, -\frac{\sin[(2p+1)\pi/(2p)] \sin[((p-1)\pi/p)]}{y+\sin^2(\pi/(4p))}\right),
\end{multline*}
where the coefficient of proportionality only depends on $k,p$. 
\end{cor}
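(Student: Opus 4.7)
My plan is to start from the integral representation of the density of $V_0$ furnished by Corollary \ref{Cor1}, specialize to $\phi = \pi/(4p)$, and carry out the weighted Laplace integral by Fubini in two stages: first the $v$-integral, then the remaining simplex integral.

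First, at the bisector angle $\phi = \pi/(4p)$ one has $\sin^{2\nu}(2p\phi) = 1$. Moreover, the two sums appearing inside the ${}_1F_1$'s of Corollary \ref{Cor1}, namely $\sum_s u_s \cos^2(\pi/(4p) + s\pi/p)$ and $\sum_s u_s \cos^2(\pi/(4p) - (2s-1)\pi/(2p))$, both range (after reducing the angles modulo $\pi$ and using the evenness of $\cos^2$) over the same multiset of values $\{\cos^2 \theta_s\}_{s=1}^p$ with $\theta_s := (4s-3)\pi/(4p)$, only with the labels $u_s$ permuted. Since the measure $\prod_s u_s^{k-1}\,du$ on $\Sigma_p$ is symmetric, the two contributions give the same integral, and the density of $V_0$ is proportional to $e^{-v} v^{2p\nu - 1} \int_{\Sigma_p} \prod_s u_s^{k-1}\,{}_1F_1(p,\, pk + 1/2;\, v B(u))\,du$ with $B(u) := \sum_s u_s \cos^2 \theta_s$.

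Next, I would exchange the order of integration in $\mathbb{E}(V_0^{(p+1)/2 - p\nu} e^{-y V_0})$ and compute the inner integral $\int_0^\infty v^{p\nu + (p+1)/2 - 1} e^{-(1+y)v}\,{}_1F_1(p,\, pk + 1/2;\, vB)\,dv$. Expanding the ${}_1F_1$ termwise via \eqref{ExpConf} and integrating against the gamma kernel yields a constant times ${}_2F_1(p,\, c;\, c;\, B/(1+y))$ where $c := pk + 1/2 = p\nu + (p+1)/2$. The coincidence that the $v$-power matches the second parameter of ${}_1F_1$ is essential: the ${}_2F_1$ collapses to $(1 - B/(1+y))^{-p}$, producing a factor proportional to $(1+y)^{p - c}(1 + y - B)^{-p}$, whose $(1+y)$-exponent is exactly $-(p(\nu - 1) + (p+1)/2)$ as claimed.

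Finally, I would bring $\int_{\Sigma_p} \prod_s u_s^{k-1} (1 + y - B)^{-p}\,du$ into Lauricella form. Using $\sum_s u_s = 1$ and $1 - \cos^2 \theta_s = \sin^2 \theta_s$, write $1 + y - B = \sum_s u_s (y + \sin^2 \theta_s)$, factor out $y + \sin^2 \theta_1 = y + \sin^2(\pi/(4p))$, and turn the bracket into $1 - \sum_{s=2}^p u_s z_{s-1}$ with $z_{s-1} := -(\sin^2 \theta_s - \sin^2 \theta_1)/(y + \sin^2(\pi/(4p)))$. The identity $\sin^2 A - \sin^2 B = \sin(A+B)\sin(A-B)$ together with $\theta_s + \theta_1 = (2s-1)\pi/(2p)$ and $\theta_s - \theta_1 = (s-1)\pi/p$ then produces the arguments displayed in the statement. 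Treating $u_1$ as the variable eliminated by the simplex constraint in \eqref{IRD} (legal by the symmetry of the Dirichlet measure) and applying that formula with $a = p$ and $d_1 = \cdots = d_p = k$ identifies the remaining integral as $F_D^{(p-1)}(p,\, k, \ldots, k,\, pk;\, z_1, \ldots, z_{p-1})$, completing the proof.

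The main hurdle is the first step, namely recognizing that the two ${}_1F_1$ contributions in Corollary \ref{Cor1} merge into a single symmetric simplex integral at the bisector. Beyond that, the argument is driven by two algebraic coincidences -- the degenerate case ${}_2F_1(p, c; c; z) = (1-z)^{-p}$ forced by the precise matching of parameters, and the Euler-type representation \eqref{IRD} of $F_D^{(p-1)}$ -- together with the trigonometric identity for $\sin^2 A - \sin^2 B$; the only real care needed is in the bookkeeping of signs and of the reindexing of simplex variables.
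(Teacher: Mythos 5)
Your proposal is correct and follows essentially the same route as the paper's proof: specialize to the bisector so that the two confluent hypergeometric contributions of Corollary \ref{Cor1} coincide, integrate out $v$ termwise so that the matching of the $v$-exponent with the parameter $pk+\tfrac12$ collapses the series to $(1+y-M_p)^{-p}$, rewrite $1+y-M_p$ via a product-to-sum identity, and conclude with the Euler integral representation \eqref{IRD} of $F_D^{(p-1)}$. You are in fact slightly more careful than the paper on the first step, observing that the two sums of squared cosines agree only as multisets (i.e.\ after permuting the $u_s$), which suffices by the symmetry of the Dirichlet measure on the simplex.
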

\begin{proof}
The choice $\phi = \pi/(4p)$ ensures the equality: 
\begin{equation*}
\sum_{s=1}^p u_s \cos^{2}\left(\frac{\pi}{4p} + \frac{s\pi }{p}\right) = \sum_{s=1}^p u_s \cos^2\left( \frac{(4s+1)\pi}{4p} \right)  = \sum_{s=1}^p u_s \cos^{2}\left(\frac{\pi}{4p} - \frac{\pi(2s-1)}{2p}\right),
\end{equation*} 
so that both confluent hypergeometric functions displayed in Corollary \ref{Cor1} coincide. Now, set 
\begin{equation*}
M_p(u_1, \ldots, u_p) := \sum_{s=1}^p u_s \cos^{2}\left(\frac{(4s+1)\pi}{4p}\right). 
\end{equation*}
Then, the Gamma integral together with the generalized binomial Theorem entail:  
\begin{multline*}
\int_0^{\infty} v^{p\nu + (p-1)/2} e^{-v(1+y)} {}_1F_1 \left(p, p\nu+\frac{(p+1)}{2}; vM_p(u_1, \ldots, u_p)\right) = \frac{\Gamma(p\nu+(p+1)/2)}{(1+y)^{p\nu+(p+1)/2}} \sum_{N = 0}^{\infty} \frac{(p)_N}{N!} \\ 
\left[\frac{M_p(u_1, \ldots, u_p)}{(1+y)}\right]^N  = \frac{1}{(1+y)^{p(\nu-1)+(p-3)/2}}\frac{1}{[1+y - M_p(u_1, \ldots, u_p)]^p}.
\end{multline*}
Next, we compute
\begin{align*}
1+y - M_p(u_1, \ldots, u_p) & = 1+y - \sum_{s=1}^{p-1} u_s\cos^{2}\left(\frac{(4s+1)\pi}{4p}\right) - (1-u_1-\ldots - u_{p-1})\cos^2\left(\frac{\pi}{4p}\right)
\\& = y + \sin^2\left(\frac{\pi}{4p}\right) - \sum_{s=1}^{p-1}u_s\left\{\cos^{2}\left(\frac{(4s+1)\pi}{4p}\right) - \cos^2\left(\frac{\pi}{4p}\right)\right\}
\\& = y + \sin^2\left(\frac{\pi}{4p}\right) - \frac{1}{2}\sum_{s=1}^{p-1}u_s\left\{ \cos\left(\frac{(4s+1)\pi}{2p}\right) - \cos\left(\frac{\pi}{2p}\right)\right\}
\\& = y + \sin^2\left(\frac{\pi}{4p}\right) - \sum_{s=1}^{p-1}u_s \sin\left(\frac{(2s+1)\pi}{2p}\right)\sin\left(-\frac{s\pi}{p}\right)
\\& = \left[y + \sin^2\left(\frac{\pi}{4p}\right)\right]^{-1} \left\{1- \sum_{s=1}^{p-1}u_s \frac{\sin[(2s+1)\pi/(2p)] \sin[-(s\pi/p)]}{y+\sin^2(\pi/(4p))}\right\}. 
\end{align*}
Using \eqref{IRD}, we are done. 
\end{proof}
\begin{rem}
If $p=2$, then 
\begin{equation*}
\sin^2\left(\frac{\pi}{8}\right) = \frac{\sqrt{2}-1}{2\sqrt{2}}
\end{equation*}
and the Lauricella function ${}F_D^{(1)}$ coincides with the Gauss hypergeometric function: 
\begin{equation*}
{}_2F_1\left(2, k, 2k; \frac{2}{1- \sqrt{2}(1+2y)}\right).
\end{equation*}
Using the argument transformation \eqref{ET}, this hypergeometric function is transformed into: 
\begin{equation*}
{}_2F_1\left(2, k, 2k; \frac{2}{1- \sqrt{2}(1+2y)}\right) = \frac{(1-\sqrt{2}(1+2y))^2}{( 1+\sqrt{2}(1+2y))^2} {}_2F_1\left(2, k, 2k; \frac{2}{1+ \sqrt{2}(1+2y)}\right).
\end{equation*}
As a result, 
\begin{equation*}
\mathbb{E}_{\rho, \pi/(8)}^{(-\nu)}\left(v^{(3)/2- 2\nu} e^{-yV_0}\right) \propto \frac{\sin^{2\nu}(4\phi)}{(1+y)^{2\nu-(1/2)}}\frac{1}{( 1+\sqrt{2}(1+2y))^2} {}_2F_1\left(2, k, 2k; \frac{2}{1+ \sqrt{2}(1+2y)}\right)
\end{equation*}
which is the expression derived in the proof of Corollary 3.5 in \cite{Demni1}. 
\end{rem}

\section{Integral representation of the density of $V_0$: odd dihedral wedges} 
Our previous reasoning applies to odd dihedral groups. Indeed, notice that if $k_0 = k_1$, then the infinitesimal generator \eqref{G2} reduces to 
\begin{equation*}
\frac{1}{2}\left[\partial_r^2  + \frac{4pk_0 +1}{r}\partial_r\right]  + \frac{1}{r^2}\left[\frac{\partial_{\theta}^2}{2} +2pk_0\cot(2p\theta) \right], \quad \theta \in [0, \pi/(2p)],
\end{equation*}
which is exactly the infinitesimal generator \eqref{G1} after the identifications $k_0 \leftrightarrow k, n \leftrightarrow 2p$. Since the tail distribution of $T_0$ starting at $x$ is the unique solution of the heat equation with appropriate boundary conditions (see e.g. \cite{Deb}), then this probability has the same expression in both cases (i.e., corresponding to odd dihedral wedges on the one side and to even ones with equal multiplicity values on the other side) under the previous identifications. Of course, one can also mimic the derivation of the tail distribution written in \cite{Demni0}, paragraph 7.1 and retrieve the same expression. Hence, if $n \geq 3$ is odd and $k \in (1/2,1]$, then the radial Dunkl process associated with the dihedral group $\mathcal{D}_2(n)$ and corresponding to the multiplicity value $1-k \in [0,1/2)$ hits $\partial C$ almost surely and the density of $V_0$ is (up to a normalizing constant) the even part of the following series:  
\begin{equation}\label{Density0}
\sin^{2\nu}(n\phi) e^{-v}v^{n\nu-1}\sum_{j \geq 0}\frac{\Gamma(n(j+1)/2)}{\Gamma(n(j+k))}(\sqrt{v})^{j}{}_1F_1\left(n(j+1)/2, n(j+k)+1, v\right)C_{j}^{(k)}(\cos(n\phi)), \quad v > 0.
\end{equation}
Using again \eqref{ExpConf} followed by \eqref{IdGeg}, we may rewrite \eqref{Density0} as: 
\begin{multline*}
\sum_{j \geq 0}\frac{\Gamma(n(j+1)/2)}{\Gamma(n(j+k))} (\sqrt{v})^{j} {}_1F_1\left(n(j+1)/2, n(j+k)+1, v\right)C_{j}^{(k)}(\cos(n\phi)) = \sum_{j, m \geq 0} n(j+k) \\ 
\frac{\Gamma(n(j+1)/2+m)}{m!\Gamma(nj+m+nk+1)} (\sqrt{v})^{nj+2m}C_{j}^{(k)}(\cos(n\phi))
= \sum_{N \geq 0} (\sqrt{v)}^N\Gamma((N+n)/2) 
\\ \sum_{N = nj+2m} \frac{n(j+k)}{m!\Gamma(nj+m+nk+1)} C_{j}^{(k)}(\cos(n\phi))
= \sum_{N \geq 0} (2\sqrt{v})^N\frac{\Gamma((N+n)/2))}{\Gamma(N+nk)}  
\\ \sum_{\substack{j_1, \ldots, j_n \geq 0 \\ j_1+\cdots j_n = N}}  (k)_{j_1}\ldots (k)_{j_{n}} \frac{[b_{1,n}(n\phi)]^{j_1}}{j_1!}\cdots  \frac{[b_{n,n}(n\phi)]^{j_{n}}}{j_{n}!}.
\end{multline*}
Finally, the Dirichlet integral \eqref{Dir} and the multinomial Theorem lead again to:
\begin{pro}
Let $n \geq 3$ be an integer and $k > 0$. Then, for any $v > 0$, 
\begin{multline*}
\sum_{j \geq 0}\frac{\Gamma(n(j+1)/2)}{\Gamma(n(j+k))}v^{j/2}{}_1F_1\left(n(j+1)/2, n(j+k)+1, v\right)C_{j}^{(k)}(\cos(n\phi)) = \frac{1}{[\Gamma(k)]^n} \int_{\Sigma_n} du_1\ldots du_{n-1} \prod_{s=1}^n u_s^{k-1} 
\\ \sum_{N \geq 0} \frac{\Gamma((N+n)/2))}{(nk)_N} \frac{(2\sqrt{v})^N}{N!} \left[\sum_{s=1}^n u_s\cos\left(\phi + \frac{2s\pi}{n}\right)\right]^N. 
\end{multline*}
\end{pro}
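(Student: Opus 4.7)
My plan is to follow closely the template of the proof of the preceding proposition for even dihedral wedges, and in fact the odd case is structurally simpler because no pairing/cancellation argument is required: the left-hand side of \eqref{Density0} already has the right shape for a direct application of \eqref{IdGeg}.

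The first step is to use the series definition \eqref{ExpConf} to expand the confluent hypergeometric function inside the outer series, together with the elementary relation $\Gamma(n(j+k)+1) = n(j+k)\Gamma(n(j+k))$, so that the prefactor $\Gamma(n(j+1)/2)/\Gamma(n(j+k))$ combines cleanly with the Pochhammer symbols arising in the ${}_1F_1$ expansion. This rewrites the left-hand side as a double series indexed by $(j,m) \geq 0$, whose summand is proportional to $n(j+k)\,\Gamma(n(j+1)/2+m)/[m!\,\Gamma(n(j+k)+m+1)]$ and which carries the monomial $(\sqrt v)^{nj+2m}\,C_j^{(k)}(\cos(n\phi))$.

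The next step is to collect terms by setting $N := nj + 2m$. The crucial observation is that $\Gamma(n(j+1)/2+m) = \Gamma((N+n)/2)$ depends only on $N$ and therefore factors out of the constrained inner sum over $(j,m)$ with $N = nj + 2m$. The identity \eqref{IdGeg} of \cite{Del-Dem}, specialized to $M = N$, $q = n$ and $\xi = n\phi$, then evaluates this inner sum exactly: it produces the factor $2^N/\Gamma(N+nk)$ (upgrading $(\sqrt v)^N$ to $(2\sqrt v)^N$) and re-expresses the angular dependence as a sum over tuples $(j_1,\ldots,j_n)$ with $j_1 + \cdots + j_n = N$ involving $\prod_s (k)_{j_s}/j_s!$ and powers of $b_{s,n}(n\phi) = \cos(\phi + 2s\pi/n)$.

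The final step is to apply the Dirichlet integral \eqref{Dir} with $\beta_s = k + j_s$, which writes $\prod_s (k)_{j_s}$ as $\Gamma(N+nk)/[\Gamma(k)]^n$ times $\int_{\Sigma_n}\prod_s u_s^{k+j_s-1}\,du$. The $\Gamma(N+nk)$ thus produced cancels the one coming from \eqref{IdGeg}; after pulling $u_s^{k-1}$ outside, the multinomial theorem collapses the constrained sum $\sum_{j_1+\cdots+j_n=N}\prod_s [u_s\cos(\phi+2s\pi/n)]^{j_s}/j_s!$ into $[\sum_s u_s\cos(\phi+2s\pi/n)]^N/N!$, which delivers the stated integral representation. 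The main thing to watch is careful Pochhammer/Gamma bookkeeping, in particular the half-integer arguments appearing in $\Gamma((N+n)/2)$ because $n$ is odd; no genuine analytic obstacle arises, and the absence of two distinct multiplicities directly yields integration over $\Sigma_n$, without the reduction from $\Sigma_{2p}$ to $\Sigma_p$ that was needed in the even case.
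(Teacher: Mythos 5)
Your proof follows the paper's argument exactly: expand the ${}_1F_1$ via \eqref{ExpConf}, regroup the double series by $N=nj+2m$ using that $\Gamma(n(j+1)/2+m)=\Gamma((N+n)/2)$ depends only on $N$, apply \eqref{IdGeg} with $M=N$, $q=n$, $\xi=n\phi$, and conclude with the Dirichlet integral \eqref{Dir} (with $\beta_s=k+j_s$) and the multinomial theorem. Your remark that no pairing/cancellation step is needed in the odd case is precisely why the paper's proof here is shorter than in the even case, so the proposal is correct and essentially identical to the paper's.
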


The series occurring in the integrand is a Fox-Wright confluent hypergeometric function ${}_1\Psi_1$ (\cite{Man-Sri}): 
\begin{equation*}
\sum_{N \geq 0} \frac{\Gamma((N+n)/2) )}{(nk)_N} \frac{z^n}{N!} = \Gamma(nk) {}_1\Psi_1 \left[((n/2), (1/2)), (nk, 1); z\right], \quad z \in \mathbb{C}.
\end{equation*}
Moreover, one obtains a similar integral representation for the density of $V_0$ along the same lines written in the proof of Corollary \ref{Cor1}: 
\begin{cor}
Let $n$ be an odd integer and $k \in (1/2,1]$. Then, the density of $V_0$ admits the following integral representation (up to a constant depending only on $n,k$):
\begin{multline*}
\sin^{2\nu}(n\phi) e^{-v}v^{n\nu-1} \int_{\Sigma_n} du_1\ldots du_{n-1} \prod_{s=1}^n u_s^{k-1}\left\{{}_1\Psi_1 \left[((n/2), (1/2)), (nk, 1); 2\sqrt{v} \sum_{s=1}^n u_s\cos\left(\phi + \frac{2s\pi}{n}\right)\right]\right.
 \\ \left. + {}_1\Psi_1 \left[((n/2), (1/2)), (nk, 1); 2\sqrt{v} \sum_{s=1}^n u_s\cos\left(\phi - \frac{(2s-1)\pi}{n}\right)\right]\right\}.
\end{multline*}
\end{cor}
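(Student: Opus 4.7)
My plan is to follow the proof of Corollary \ref{Cor1} line by line. By the remark preceding \eqref{Density0}, the density of $V_0$ equals, up to a constant, one half times the sum of the two series $S_\pm$ obtained by weighting each term in \eqref{Density0} with $(\pm 1)^j$. The Proposition immediately above the statement already writes $S_+$ as an integral over $\Sigma_n$ of the Fox--Wright function ${}_1\Psi_1$ with argument $2\sqrt{v}\sum_{s=1}^n u_s\cos(\phi+2s\pi/n)$; together with the prefactor $\sin^{2\nu}(n\phi)e^{-v}v^{n\nu-1}$ this is exactly the first summand inside the braces.

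For $S_-$, I would invoke the parity relation \eqref{Sym} for Gegenbauer polynomials,
\begin{equation*}
(-1)^j C_j^{(k)}(\cos(n\phi)) \;=\; C_j^{(k)}\!\left(\cos\!\left(n\!\left(\tfrac{\pi}{n}-\phi\right)\right)\right),
\end{equation*}
which identifies $S_-$ with $S_+$ evaluated at $\phi \mapsto \pi/n - \phi$. Substituting this into the integral representation given by the Proposition and using the parity of cosine, the argument of the integrand becomes $2\sqrt{v}\sum_{s=1}^n u_s \cos(\phi - (2s+1)\pi/n)$, which has the same shape as in the first summand but shifted by one index.

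It then remains to recast this sum in the form appearing in the second summand of the statement. Exactly as in the even-dihedral case, the boundary identity $\cos(\phi-(2n+1)\pi/n) = \cos(\phi-\pi/n)$ lets me cyclically reindex $s \mapsto s+1 \pmod n$, so that the inner sum is rewritten as $\sum_{s=1}^n u_{s-1}\cos(\phi-(2s-1)\pi/n)$ with $u_0 := u_n$. Since $\prod_{s=1}^n u_s^{k-1}$ and the simplex $\Sigma_n$ are invariant under permutations of the $u_s$, one may rename $u_{s-1} \mapsto u_s$ inside the integral to obtain the argument $2\sqrt{v}\sum_{s=1}^n u_s\cos(\phi-(2s-1)\pi/n)$ of the second summand. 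Adding the two integral representations, restoring the prefactor, and lumping the constants $\Gamma(nk)/[\Gamma(k)]^n$ and the factor $1/2$ into the proportionality constant yields the claim. The only step requiring some care is this cyclic reindexing modulo $n$, which is however entirely parallel to the passage from \eqref{Formula2} to \eqref{Formula3} in the even-dihedral proof; the remainder of the argument is a routine transcription.
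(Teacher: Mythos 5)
Your proposal is correct and follows exactly the route the paper intends: the paper's own "proof" is the single remark that the result follows "along the same lines written in the proof of Corollary \ref{Cor1}", and your argument is precisely that transcription — apply the Proposition to the $(+1)^j$-series, use \eqref{Sym} to identify the $(-1)^j$-series with the substitution $\phi \mapsto \pi/n - \phi$, and cyclically reindex via the $2\pi$-periodicity $\cos(\phi-(2n+1)\pi/n)=\cos(\phi-\pi/n)$ together with the permutation invariance of $\prod_s u_s^{k-1}$ on $\Sigma_n$. No gaps; if anything, you supply more detail than the paper does.
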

As to the analogue of Corollary \ref{Cor2}, we similarly notice that 
\begin{equation*}
\sum_{s=1}^n u_s\cos\left(\frac{\pi}{2n}+ \frac{2s\pi}{n}\right) = \sum_{s=1}^n u_s\cos\left(\frac{\pi}{2n} - \frac{(2s-1)\pi}{n}\right)
\end{equation*}
and integrate the density specialized to $\phi = \pi/(2n)$ with respect to the Gamma weight $v^{(n+1)/2 - n\nu} e^{-yv}$ for a given $y > 0$. Using the Gauss duplication formula, the resulting weighted Laplace transform is then given by: 
\begin{cor}
Let $n \geq 3$ be an odd integer and $k > 0$. If the radial Dunkl process associated with the multiplicity value $(1-k)$ starts at $\rho e^{i\pi/(2n)}$, then 
\begin{multline*}
\mathbb{E}_{\rho, \pi/(2n)}^{-\nu}\left(V_0^{(n+1)/2 -n\nu} e^{-yV_0} \right) \propto \frac{1}{(1+y)^{(n+1)/2}} \int_{\Sigma_n} du_1\ldots du_{n-1} \prod_{s=1}^n u_s^{k-1} \\ {}_1F_1 \left[n, nk; \frac{1}{\sqrt{1+y}}\sum_{s=1}^n u_s\cos\left(\frac{(4s+1)\pi}{2n}\right)\right]
\end{multline*}
where the coefficient of proportionality depends only on $n,k$. 
\end{cor}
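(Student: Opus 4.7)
The plan is to apply the integral representation of the density from the preceding corollary at $\phi = \pi/(2n)$ and then integrate against the weight $v^{(n+1)/2 - n\nu} e^{-yv}$, collapsing the resulting Fox--Wright series to an ordinary ${}_1F_1$ by means of Legendre's duplication formula. First, I would notice that at $\phi = \pi/(2n)$ the two cosine sums appearing in the preceding corollary have the same multiset of values modulo $2\pi$: the angles $(4s+1)\pi/(2n)$, $s=1,\ldots,n$, reduce to $\{\pi/(2n), 5\pi/(2n), \ldots, (4n-3)\pi/(2n)\}$, which is exactly the set of values of $\cos((4s-3)\pi/(2n))$. Since $\Sigma_n$ and the weight $\prod_s u_s^{k-1}$ are invariant under permutations of $(u_1,\ldots,u_n)$, both Fox--Wright summands contribute equally after integration, and the density at $\phi = \pi/(2n)$ is proportional to
\begin{equation*}
e^{-v}v^{n\nu-1} \int_{\Sigma_n} du_1 \ldots du_{n-1} \prod_{s=1}^n u_s^{k-1} \; {}_1\Psi_1\bigl[((n/2), (1/2)), (nk, 1);\, 2\sqrt{v}\, M(u)\bigr],
\end{equation*}
where $M(u) := \sum_{s=1}^n u_s \cos((4s+1)\pi/(2n))$.

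Next, I would expand this ${}_1\Psi_1$ as a power series in $2\sqrt{v}\,M(u)$, multiply by $v^{(n+1)/2-n\nu} e^{-yv}$, and exchange sum and $v$-integral (justified by absolute convergence: the Fox--Wright series is entire and $M(u)$ is bounded on $\Sigma_n$). Each term generates a Gamma integral
\begin{equation*}
\int_0^{\infty} v^{(N+n+1)/2 - 1} e^{-(1+y)v}\, dv \;=\; \frac{\Gamma((N+n+1)/2)}{(1+y)^{(N+n+1)/2}},
\end{equation*}
so that after pulling out $(1+y)^{-(n+1)/2}$ the $N$-th coefficient becomes $\Gamma((N+n)/2)\Gamma((N+n+1)/2)/\Gamma(N+nk)$ times $(2M(u)/\sqrt{1+y})^N / N!$. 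Applying \eqref{Leg} with $z=(N+n-1)/2$ collapses the pair of Gammas into $\sqrt{\pi}\,\Gamma(N+n)/2^{N+n-2}$; the resulting $2^{-N}$ cancels the $2^N$ in $(2M)^N$, and the identity $\Gamma(N+n)/\Gamma(N+nk) = [\Gamma(n)/\Gamma(nk)](n)_N/(nk)_N$ then identifies the remaining series with ${}_1F_1(n, nk; M(u)/\sqrt{1+y})$ via \eqref{ExpConf}. Reinstating the $u$-integral and absorbing all $N$-independent factors into the proportionality constant yields the stated formula.

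The only non-routine step is the duplication collapse: the second Gamma factor $\Gamma((N+n+1)/2)$ is produced by the $v$-integration and is exactly the partner needed to convert the two half-integer Gammas in the Fox--Wright series into a single integer-indexed Gamma, which is what reduces the Fox--Wright function to the ordinary ${}_1F_1$. Everything else is bookkeeping with Pochhammer symbols and constants, and the bisector choice $\phi=\pi/(2n)$ is essential for the two Fox--Wright summands to coalesce into a single term in the final answer.
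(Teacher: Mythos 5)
Your proposal is correct and follows essentially the same route as the paper: specialize the density to $\phi=\pi/(2n)$, integrate term by term against the Gamma weight $v^{(n+1)/2-n\nu}e^{-yv}$, and use Legendre's duplication formula to collapse the two half-integer Gamma factors $\Gamma((N+n)/2)\Gamma((N+n+1)/2)$ into $\Gamma(N+n)$, turning the Fox--Wright series into ${}_1F_1(n,nk;\cdot)$. Your observation that the two cosine sums agree only after symmetrization over the simplex (rather than pointwise in $u$) is in fact a more careful justification than the one-line identity stated in the paper.
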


\end{document}